\newtheorem{theorem}{Theorem}
\newtheorem{proposition}{Proposition}
\newtheorem{corollary}[theorem]{Corollary}
\theoremstyle{definition}
\newtheorem{remark}{Remark}[section]
\numberwithin{equation}{section}
\begin{document}

\begin{frontmatter}
\pretitle{Research Article}

\title{Alternative probabilistic representations of Barenblatt-type solutions}

\author{\inits{A.}\fnms{Alessandro}~\snm{De Gregorio}\thanksref{cor1}\ead[label=e1]{alessandro.degregorio@uniroma1.it}\orcid{0000-0003-0809-6046}}
\author{\inits{R.}\fnms{Roberto}~\snm{Garra}\ead[label=e2]{roberto.garra@uniroma1.it}}
\thankstext[type=corresp,id=cor1]{Corresponding author.}
\address{Department of Statistical Sciences, \institution{``Sapienza'' University of Rome},\break  P. le Aldo Moro, 5 -- 00185, Rome, \cny{Italy}}



\markboth{A. De Gregorio, R. Garra}{Alternative probabilistic representations of Barenblatt-type solutions}

\begin{abstract}
A general class of probability density functions
\[
u(x,t)=C t^{-\alpha d}\left (1-\left (\frac{\|x\|}{ct^{\alpha }}
\right )^{\beta }\right )_{+}^{\gamma },\quad x\in \mathbb{R}^{d},t>0,
\]
is considered, containing as particular case the Barenblatt solutions arising, for instance,
in the study of nonlinear heat equations.
Alternative probabilistic
representations of the Barenblatt-type solutions $u(x,t)$ are proposed. In the one-dimensional
case, by means of this approach,
$u(x,t)$ can be connected with the
wave propagation.
\end{abstract}
\begin{keywords}
\kwd{Anomalous diffusion}
\kwd{Beta random variable}
\kwd{Euler--Poisson--Darboux equation}
\kwd{Fourier transform}
\kwd{nonlinear diffusion equation}
\kwd{random velocity}
\end{keywords}
\begin{keywords}[MSC2010]%
\kwd{60G07}
\end{keywords}

\received{\sday{29} \smonth{11} \syear{2019}}
\revised{\sday{4} \smonth{3} \syear{2020}}
\accepted{\sday{13} \smonth{3} \syear{2020}}
\publishedonline{\sday{23} \smonth{3} \syear{2020}}

\end{frontmatter}
\section{Introduction}
\label{sec1}

The Brownian motion\index{Brownian motion} is a stochastic process of interest for pure
and applied mathematicians. The probability distribution of the Brownian
motion\index{Brownian motion} is given by the Gaussian kernel
\begin{equation*}
G(x,t)=\frac{1}{(4\pi t)^{d/2}}\exp (-\|x\|^{2}/4t), \quad x\in
\mathbb{R}^{d}, t>0,
\end{equation*}
which is the source-type solution (that is $G(x,0)=\delta (x)$, where
$\delta (x)$ represents Dirac's delta function) to the parabolic heat
equation\index{parabolic heat equation}
\begin{equation*}
\frac{\partial u}{\partial t}=\Delta u.
\end{equation*}
The main 
unwanted feature of this solution is that
it is inevitably
positive everywhere in its domain of definition; i.e., the Brownian motion\index{Brownian motion}
scatters with unbounded velocity. A way to overcome this 
feature is to
consider the porous medium equation\index{Porous Medium Equation (PME)} which is a nonlinear diffusion equation
\begin{equation}
\label{eq:pme0}
\frac{\partial u}{\partial t}=\Delta (u^{m}),\quad m>1,
\end{equation}
having the source-type solution given by
\begin{equation*}
U(x,t)=Ct^{-\alpha d}\left (1-\frac{\|x\|^{2}}{c^{2}t^{2\alpha }}
\right )_{+}^{\frac{1}{m-1}},\quad \alpha >0,c>0,
\end{equation*}
where $(x)_{+}:=\max (x,0)$ and $C$ is a suitable constant such that
$\int U(x,t) \mathrm{d}x=1$. The solution $U(\cdot ,t)$ is a compactly
supported function and it is called the  \emph{Barenblatt solution}. For a complete
description of the mathematical analysis 
related to the partial
differential equation \eqref{eq:pme0} the reader can consult
\cite{vazquez}. The connection between the porous medium equation\index{Porous Medium Equation (PME)} and the
theory of stochastic processes has been investigated, for instance, in
\cite{inoue,inoue2,inoue3,ben,feng,jou,phil} and \cite{dgo2}.

The aim of this paper is to study a class of functions generalizing the
Barenblatt solution $U(x,t)$.\index{Barenblatt solution} For fixed $t>0$, we 
consider the map
\begin{align}
\label{eq:gen}
\mathbb{R}^{d} \ni x\mapsto u:=u(x,t):=C t^{-\alpha d}\left (1-\left (
\frac{\|x\|}{ct^{\alpha }}\right )^{\beta }\right )_{+}^{\gamma },
\end{align}
where $\alpha >0,\beta >0$ and $\gamma >0$, and
\begin{equation*}
C:=C(\beta ,\gamma ,d):=
\frac{\beta }{c^{d}\sigma (\mathbb{S}^{d-1})\mathrm{Beta}(\frac{d}{\beta },\gamma +1)}
\end{equation*}
is a positive constant determined by the condition
$\|u(x,t)\|_{L^{1}(\mathbb{R}^{d},\mathrm{d}x)}=1$ (the property of the mass conservation), $\mathrm{d}x$ denoting the Lebesgue measure on
$(\mathbb{R}^{d},\mathcal{B}(\mathbb{R}^{d}))$,
$\sigma (\mathbb{S}^{d-1}):=2\pi ^{d/2}/\Gamma (d/2)$ represents the surface
area of the $(d-1)$-dimensional sphere $\mathbb{S}^{d-1}$ with radius one
and $\mathrm{Beta}(a,b):=\frac{\Gamma (a)\Gamma (b)}{\Gamma (a+b)}, a,b>0$. We observe
that $u$ is a probability density function\index{probability density function} with an associated absolutely continuous
probability measure given by
\begin{equation}
\mu _{t}(\mathrm{d}x)=u(x,t)\mathrm{d}x,
\end{equation}
having the following features:
\begin{itemize}
\item The density function $u$\index{density function} is compactly supported; i.e., for every
$t>0$, the support of $u(\cdot ,t)$ is given by
\begin{equation*}
\text{supp}\, u(\cdot ,t)= B_{r(t)}:=\{x\in \mathbb{R}^{d}:\|x\|\leq r(t)
\},
\end{equation*}
representing a closed ball with radius $r(t):=ct^{\alpha }$. This property
implies the finite speed of propagation of $u$; i.e., the $(d-1)$-dimensional
sphere with radius $r(t)$ denoted by $\mathbb{S}^{d-1}_{r(t)}$ provides
the free boundary separating the regions
$\{(x,t)\in \mathbb{R}^{d} \times (0,\infty ):u(x,t)>0\}$ and
$\{(x,t)\in \mathbb{R}^{d} \times (0,\infty ):u(x,t)=0\}$.
\item The probability measure $\mu _{t}$ is rotationally invariant; that
is, $\mu _{t}(\mathrm{d}x)=\break u(\|x\|,t)\mathrm{d}x$, or equivalently,
$\mu (M A)=\mu (A)$ for $A\in \mathcal{B}(\mathbb{R}^{d})$ and
$M\in O(d)$, where $O(d)$ is the group of $d\times d$ orthogonal matrices
acting in $\mathbb{R}^{d}$, where $d\geq 2$. As a direct consequence of this
property we derive
\begin{align}
\label{eq: ball}
\mu _{t}(B_{a})&=\sigma (\mathbb{S}^{d-1})\int _{0}^{a} r^{d-1}u(r,t)
\mathrm{d}r\nonumber\\
&=\frac{\mathrm{Beta}\left ((a/ct^{\alpha })^{\beta };\frac{d}{\beta },\gamma +1\right )}{\mathrm{Beta}(\frac{d}{\beta },\gamma +1)},
\quad 0<a<ct^{\alpha },
\end{align}
where $\mathrm{Beta}(x;a,b)=\int _{0}^{x} y^{a-1}(1-y)^{b-1}\mathrm{d}y, x
\in \mathbb{R}$, is the incomplete Gamma function.
\item The density $u$ is a self-similar function. Indeed
\begin{equation*}
u(x,t)=L^{d\alpha }u(L^{\alpha }x, Lt), \quad L>0.
\end{equation*}
\end{itemize}

We refer to \eqref{eq:gen} as the class of the \textit{Barenblatt-type solutions}.
The family of functions \eqref{eq:gen} contains as particular cases, for
instance, weak solutions of several nonlinear and linear diffusion equations
(see Section \ref{sec2}). Furthermore, in \cite{getoor} it is proved that
the mean exit time of a symmetric L\'{e}vy stable process\index{symmetric L\'{e}vy stable process} from a ball admits
a representation belonging to the Barenblatt-type solution class. In this
paper we provide alternative probabilistic representations of Barenblatt-type
density functions in terms of mean value of delta functions containing
random terms (see Section \ref{sec3}). At least in the case $d=1$, our
approach permits to shed light on the connection of the nonlinear diffusion
with the propagation of waves and spherical waves (which are described
by means of linear partial differential equations). The main novelty of
this interpretation is that a wave performs random displacements nonlinearly
with respect to time. It is worth to mention that solutions belonging
to family \eqref{eq:gen} emerge from different frameworks (linear hyperbolic,\index{linear hyperbolic}
nonlinear parabolic\index{nonlinear parabolic} and nonlocal\index{nonlocal}). 
In this way, objects requiring different
mathematical tools have 
common features.

It is worth to 
note that the connections between stochastic processes
and nonlinear Fokker--Planck equations have also been analyzed in
\cite{leo,mendes} and in references therein.

\section{Barenblatt-type solutions to diffusion equations}
\label{sec2}

The aim of this section is to highlight that the class of density functions\index{density function}
of the form \eqref{eq:gen} is very general. Solutions belonging to family
\eqref{eq:gen} appear in different frameworks (linear hyperbolic,\index{linear hyperbolic} nonlinear
parabolic\index{nonlinear parabolic} and nonlocal\index{nonlocal}). Therefore, in what follows, we list some diffusion
equations studied by means of different approaches. Nevertheless,
their solutions share the same analytic form.

The Fourier transform $\mathcal{F}$ and the inverse transform
$\mathcal{F}^{-1}$ of a function
$v\in\break L^{1}(\mathbb{R}^{d}, \mathrm{d}x)$ are defined by
\begin{equation*}
\mathcal{F} v(\xi )=\int _{\mathbb{R}^{d}} v(x) e^{i x\cdot \xi }
\mathrm{d}x,\qquad \mathcal{F}^{-1} v(x)=\frac{1}{(2\pi )^{d}}\int _{
\mathbb{R}^{d}} v(\xi ) e^{-i x\cdot \xi }\mathrm{d}\xi ,
\end{equation*}
with $\xi \in \mathbb{R}^{d}$.

\subsection{Nonlinear diffusions:\index{nonlinear diffusions} $p$-Laplacian equation}
\label{sec2.1}

We 
mean here the $p$-Laplacian equation (PLE for short) studied, for instance,
in \cite{kam} and \cite{lee}, 
which is the following nonlinear degenerate
parabolic evolution equation
\begin{equation}
\label{eq:plaplacian}
\frac{\partial u}{\partial t}=\text{div}\left (|\nabla u|^{p-2}\nabla u
\right ),\quad p>2,\quad  t>0,
\end{equation}
subject to the initial condition
\begin{equation}
\label{initcon0}
u(x,0)=\delta (x),
\end{equation}
where $u:=u(x,t)$, with $x\in \mathbb{R}^{d},d\geq 1$, is a scalar function
defined on $\mathbb{R}^{d}\times \mathbb{R}^{+}$. The Cauchy problem
\eqref{eq:plaplacian}--\eqref{initcon0} admits a unique nonnegative fundamental
solution:\index{fundamental solution} it is  a function $u\geq 0$ solving \eqref{eq:plaplacian}--\eqref{initcon}
in a weak sense (see \cite{kam} and \cite{lee} for the detailed definition
of weak solution to PLE). This solution is given by the following probability
density function\index{probability density function}
\begin{align}
\label{eq:weaksol}
u(x,t)= t^{-k}\left (\mathfrak{c}-q\left (\frac{\|x\|}{t^{k/d}}\right )^{
\frac{p}{p-1}}\right )_{+}^{\frac{p-1}{p-2}},
\end{align}
where
\begin{equation*}
k:=\left (p-2+\frac{p}{d}\right )^{-1},\qquad q:=\frac{p-2}{p}\left (
\frac{k}{d}\right )^{\frac{1}{p-1}}
\end{equation*}
and $\mathfrak{c}:=\mathfrak{c}(p,d)$ is a constant determined by the condition
$\int u(x,t)\mathrm{d}x=1$. By setting
$\beta =\frac{p}{p-1}, \gamma =\frac{p-1}{p-2}, \alpha =\frac{k}{d}, C=
\mathfrak{c}^{\frac{p-1}{p-2}}$ and
$c=(\mathfrak{c}/q)^{\frac{p-1}{p}}$, the Barenblatt-type solution
\eqref{eq:gen} coincides with \eqref{eq:weaksol}.

\subsection{Nonlinear diffusions:\index{nonlinear diffusions} nonlocal porous medium equation}
\label{sec2.2}

The Nonlocal Porous Medium Equation\index{Nonlocal Porous Medium Equation (NPME)} (NPME), studied in \cite{biler0} and
\cite{biler}, is 
the following degenerate nonlinear and nonlocal\index{nonlocal}
evolution equation
\begin{equation}
\label{eq:nonloceq}
\frac{\partial u}{\partial t}=\text{div}\left (|u|\nabla ^{\nu -1}(|u|^{m-2}u)
\right ),\quad m>1,\,\nu \in (0,2],\,t>0,
\end{equation}
subject to the initial condition
\begin{equation}
\label{initcon}
u(x,0)=u_{0}(x).
\end{equation}
The pseudo-differential operator $\nabla ^{\nu -1}$ is the fractional gradient
denoting the nonlocal operator\index{nonlocal operator} defined as
$\nabla ^{\nu -1}u:=\mathcal{F}^{-1}(i\xi \|\xi \|^{\nu -2}\mathcal{F}u)$.
This notation highlights that $\nabla ^{\nu -1}$ is a pseudo-differential
(vector-valued) operator of order $\nu -1$. Equivalently, we can define
$\nabla ^{\nu -1}$ as $\nabla (-\Delta )^{\frac{\nu }{2}-1}$, where
$(-\Delta )^{\frac{\nu }{2}}u=\mathcal{F}^{-1}(\|\xi \|^{\nu }\mathcal{F}u)$ is
the fractional Laplace operator,\index{fractional Laplace operator} i.e., a Fourier multiplier
with the symbol $\|\xi \|^{\nu }$. For $\nu =2$, \eqref{eq:nonloceq} becomes the
classical nonlinear porous medium equation\index{Porous Medium Equation (PME)}
\begin{equation}
\label{eq:pme}
\frac{\partial u}{\partial t}=\text{div}\left (|u|\nabla (|u|^{m-2}u)
\right )=\text{div}\left ((m-1)|u|^{m-1}\nabla u\right ).
\end{equation}
If we restrict our attention to nonnegative solution $u(x,t)$, equation
\eqref{eq:pme} becomes
\begin{equation}
\label{eq:mpme}
\frac{\partial u}{\partial t}=\frac{m-1}{m}\Delta (u^{m}),
\end{equation}
which is usually adopted to model the flow of a gas through a porous medium.

Let $\nu \in (0,2]$ and $m>1$. A weak solution, in the sense of Definition
1 in \cite{biler}, is given by
\begin{align}
\label{eq:weaksol2}
u(x,t)= Ct^{-d\alpha }\left (1-k^{\frac{2}{\nu }}
\frac{\|x\|^{2}}{t^{2\alpha }}\right )_{+}^{\frac{\nu }{2(m-1)}},
\end{align}
where $\alpha :=\frac{1}{d(m-1)+\nu }$,
$k:=
\frac{d\Gamma (d/2)}{(d(m-1)+\nu )2^{\nu }\Gamma (1+\frac{\nu }{2})\Gamma (\frac{d+\nu }{2})}$
and
\begin{equation*}
\quad C:=
\frac{\Gamma (\frac{d}{2}+\frac{\nu }{2(m-1)}+1)k^{\frac{ d}{\nu }}}{\pi ^{\frac{d}{2}}\Gamma (\frac{\nu }{2(m-1)}+1)}.
\end{equation*}
Furthermore, $u(x,t)$ is the pointwise solution of equation
\eqref{eq:nonloceq} for $\|x\|\neq k^{-\frac{1}{\nu }}t^{\alpha }$. The link
between \eqref{eq:weaksol2} and random flights has been investigated in
\cite{DG18}. For $\nu =2$, the solution \eqref{eq:weaksol} becomes the
Barenblatt--Kompanets--Zel'dovich--Pattle solution of the porous medium equation\index{Porous Medium Equation (PME)}
\eqref{eq:mpme} supplemented with the initial condition
$u(x,0)=\delta (x)$ (see, for instance, \cite{vazquez}).

The function \eqref{eq:gen} reduces to \eqref{eq:weaksol2} for
$\gamma =\frac{\nu }{2(m-1)}, \beta =2$ and $c=1/k^{\frac{2}{\nu }}$.

\subsection{Euler--Poisson--Darboux equation}
\label{sec2.3}

It is well known that the fundamental solution\index{fundamental solution} of the Euler--Poisson--Darboux
(EPD) equation
\begin{equation}
\frac{\partial ^{2} u}{\partial t^{2}}+\frac{d+2\nu -1}{t}
\frac{\partial u}{\partial t} = c^{2} \Delta u, \quad \nu >0, \ t>0,
\ c>0,
\end{equation}
has the form
\begin{equation}
\label{epd}
u(x,t) = \frac{\Gamma (\nu +\frac{d}{2})}{\pi ^{d/2}\Gamma (\nu )}
\frac{1}{(ct)^{d}}\left (1-\frac{\|x\|^{2}}{(ct)^{2}}\right )_{+}^{
\nu -1}
\end{equation}
and therefore it belongs to the family of probability density functions
with compact support \eqref{eq:gen} with $\beta = 2$, $\alpha = 1$,
$k = d$ and $\gamma = \nu -1$. There is a wide literature about the EPD
equation\index{EPD equation} and its applications. We refer to Bresters \cite{br} for the construction
of weak solutions of the initial value problem for the EPD equation\index{EPD equation} based
on distributional methods. We recall that, in the one-dimensional case,
the solution to the Cauchy problem
\begin{equation}
\label{1.1}
\begin{cases}
\displaystyle\frac{\partial ^{2} u}{\partial t^{2}}+\frac{2\xi }{t}
\frac{\partial u}{\partial t}= c^{2}
\frac{\partial ^{2} u}{\partial x^{2}},
\\
\displaystyle u(x,0)=f(x),
\\
\displaystyle\frac{\partial u}{\partial t}(x,t)\big |_{t=0}=0,
\end{cases}
\end{equation}
can be represented as the Erd\'{e}lyi--Kober fractional integral (see
definition \eqref{eq:eki} below) of the D'Alembert solution of the wave
equation (see \cite{erd})
\begin{equation}
\begin{cases}
\displaystyle\frac{\partial ^{2} w}{\partial t^{2}}= c^{2}
\frac{\partial ^{2} w}{\partial x^{2}},
\\
\displaystyle w(x,0)= f(x),
\\
\displaystyle\frac{\partial w}{\partial t}(x,t)\big |_{t=0}=0.
\end{cases}
\end{equation}

This means that
\begin{align}
u(x,t)&=\frac{2}{\mathrm{Beta}(\xi , \frac{1}{2})}\int _{0}^{1} (1-y^{2})^{
\xi -1}\left [\frac{f(x+yct)+f(x-yct)}{2}\right ]\mathrm{d}y.
\label{2}
\end{align}
The first probabilistic interpretation\index{probabilistic interpretation} of this analytic representation, discussed
by Rosencrans \cite{Rosen} and more recently by Garra and Orsingher
\cite{garra3}, is the following one: solution \eqref{2} can be written
as
\begin{equation}
u(x,t) = \mathbf{E}\left [
\frac{f(x+\,\mathcal{U}(t))+f(x-\,\mathcal{U}(t))}{2}\right ],
\end{equation}
where
\begin{equation}
\label{agost}
\mathcal{U}(t) = U(0)\int _{0}^{t}(-1)^{N(s)}\mathrm{d}s,
\end{equation}
and $(N(t))_{t\geq 0}$ is the nonhomogeneous Poisson process with rate
$\lambda (t)= \frac{\xi }{t}$,  $U(0)$ is a uniformly distributed r.v.
on $\{-c,c\}$ (furthermore $(N(t))_{t\geq 0}$ and $U(0)$ are supposed independent).
By means of the general Proposition~\ref{prop1} (see the next section), we here obtain
a new interesting probabilistic interpretation\index{probabilistic interpretation} of the fundamental solution\index{fundamental solution}
of the EPD equation.\index{EPD equation}

Moreover, we have the following interesting picture that underlines the
role of the Barenblatt-type solution as a bridge between nonlinear and linear
PDEs:
\begin{itemize}
\item The Erd\'{e}lyi--Kober fractional integral of the solution of the D'Alembert
equation leads to the solution of the Euler--Poisson--Darboux equation under
the same initial conditions.
\item As recently pointed out in \cite{dgo2} and \cite{dgo}, the time-rescaled
Kompanets--\xch{Zel'do\-vich}{Zelddovich}--Barenblatt solution of the Porous Medium Equation\index{Porous Medium Equation (PME)}
(PME) coincides with the fundamental solution\index{fundamental solution} of the Euler--Poisson--Darboux
equation.
\end{itemize}
\noindent Therefore, we have found, by means of new analytical representations, a
direct connection between nonlinear parabolic equations and linear hyperbolic
equations.\break  From the probabilistic point\vadjust{\goodbreak} of view this connection could be expected
because in both the cases we have generalizations of the diffusion equation
leading to a finite speed propagation.

\subsection{Nonlinear time-fractional diffusive equations admitting Barenblatt-type solutions}
\label{sec2.4}

There is a wide literature about the probabilistic interpretation\index{probabilistic interpretation} of linear
space and time-fractional diffusive equations (see, e.g.,
\cite{me1,me2,me3,or} and the references therein). On the other hand,
a probabilistic approach to time-fractional nonlinear diffusive-type equations
is still completly missing. 
Recently, the existence and uniqueness
of compactly supported solutions for time-fractional porous medium equations\index{Porous Medium Equation (PME)}
has been investigated (see, e.g., \cite{pol}). However, up to our knowledge,
it is not possible to find an explicit form of the Barenblatt-type solution.
On the other hand, time-fractional diffusive equations are actracting an
increasing interest in the literature. Explicit Barenblatt-type solutions
for nonlinear time-fractional equations can play a relevant role for
future studies in this context. We here consider a new family of nonlinear
time-fractional diffusive equations admitting a Barenblatt-type solution
of the form \eqref{eq:gen}.

Let us consider the following nonlinear diffusive equation
\begin{equation}
\label{fbe}
\frac{1}{t^{2\nu }}\frac{\partial ^{\nu }u}{\partial t^{\nu }}+
\frac{1}{t^{\nu }}\frac{\partial ^{2} u}{\partial x^{2}}+\left (
\frac{\partial u}{\partial x}\right )^{2} = 0,
\end{equation}
where $\frac{\partial ^{\nu }}{\partial t^{\nu }}$ denotes the Riemann--Liouville
derivative
\begin{equation*}
\frac{\partial ^{\nu }f(x,t)}{\partial t^{\nu }} =
\frac{1}{\Gamma (1-\nu )} \frac{\partial }{\partial t}\int _{0}^{t} (t-s)^{-
\nu }f(x,s) \mathrm{d}s, \quad \nu \in (0,1),
\end{equation*}
and $f(x,\cdot )$ is a suitable well-behaved function (see
\cite{kilbas} for details about the functional setting).

We start our analysis from a simple ansatz:  equation \eqref{fbe} admits
a solution in the form
\begin{equation}
\label{solfe}
u(x,t) = \frac{C_{1}}{t^{\nu }}-C_{2} \frac{x^{2}}{t^{3\nu }}, \quad
\nu \in \left (0,1/3\right )\setminus \{1/4\} , \ (x,t)\in \mathbb{R}
\times \mathbb{R}^{+},
\end{equation}
where $C_{1}$ and $C_{2}$ are real constants that we are going to find.
We now directly check the correctness of this conjecture. We first recall
that, for $\nu >0$ and $\beta >-1$,
\begin{equation}
\label{pr}
\frac{\partial ^{\nu }}{\partial t^{\nu }}t^{\beta }=
\frac{\Gamma (1+\beta )}{\Gamma (1+\beta -\nu )}t^{\beta -\nu }.
\end{equation}
Then, by substituting \eqref{solfe} in \eqref{fbe} and by applying
\eqref{pr}, we have
\begin{equation}
\frac{\Gamma (1-\nu )}{\Gamma (1-2\nu )}\frac{C_{1}}{t^{4\nu }}-
\frac{\Gamma (1-3\nu )}{\Gamma (1-4\nu )}
\frac{C_{2} \ x^{2}}{t^{6\nu }}- \frac{2C_{2}}{t^{4\nu }}+
\frac{4C_{2}^{2} x^{2}}{t^{6\nu }}=0
\end{equation}
and balancing similar terms we have that
\begin{equation}
\begin{cases}
\displaystyle\frac{\Gamma (1-\nu )}{\Gamma (1-2\nu )}C_{1} - 2 C_{2} =0,
\\
\displaystyle 4C_{2}^{2}-\frac{\Gamma (1-3\nu )}{\Gamma (1-4\nu )}C_{2}=0.
\end{cases}
\end{equation}
Therefore
\begin{equation}
\label{cost}
\begin{cases}
\displaystyle C_{1} = \frac{1}{2}\frac{\Gamma (1-3\nu )}{\Gamma (1-4\nu )}
\frac{\Gamma (1-2\nu )}{\Gamma (1-\nu )},
\\
\displaystyle C_{2} = \frac{1}{4}\frac{\Gamma (1-3\nu )}{\Gamma (1-4\nu )}.
\end{cases}
\end{equation}

Observe that the constraint on the real order of derivation
$\nu \in \left (0,\frac{1}{3}\right )$ in \eqref{solfe} is due to the application
of \eqref{pr}.

Moreover, $\nu \neq 1/4$ because of the coefficients appearing in the solution
(depending on the Euler Gamma functions). We can conclude that  equation
\eqref{fbe} admits a solution of the form \eqref{solfe} if
$\nu \in (0,1/3)\setminus \{1/4\}$. Therefore, in particular, under the
same constraints on $\nu $, we can say that the time-fractional equation
\eqref{fbe} admits a solution of the form
\begin{equation}
\label{sf}
u(x,t) = \frac{C_{1}}{t^{\nu }}\left (1-\frac{C_{2}}{C_{1}}
\frac{x^{2}}{t^{2\nu }}\right )_{+},
\end{equation}
where $C_{1}$ and $C_{2}$ are given by \eqref{cost}.

Obviously, the solution \eqref{sf} is not normalized but it is a Barenblatt-type
solution belonging to the general family considered in this paper. A systematic
study of equation \eqref{fbe} should be an object of further investigation,
both from the physical and mathematical points of view. We conjecture that
this is a source-type solution of the nonlinear time-fractional equation
\eqref{fbe}; nevertheless a full rigorous analysis should be developed,
but this is beyond the aims of this paper. The fractional equation
\eqref{fbe} can be viewed as a hybrid between a diffusive equation with
singular time-dependent coefficients (in some way similar to the EPD equation\index{EPD equation})
and a nonlinear time-fractional porous medium type equation.\index{Porous Medium Equation (PME)}

\section{Main results}
\label{sec3}

Let us start with our first result concerning the case $d=1$.

\begin{proposition}\label{prop1}
For $d=1$, the density function\index{density function} \eqref{eq:gen} can be written  as
\begin{align}
\label{eq:dens2}
u(x,t)=\mathbf{E}_{V}\left [
\frac{\delta (x-Vt^{\alpha })+\delta (x+Vt^{\alpha })}{2}\right ],
\end{align}
where $\mathbf{E}_{V}[\cdot ]$ stands for the mean value w.r.t.
$V\stackrel{\text{(law)}}{=}cY^{1/\beta }$, where $Y\sim \mathrm{Beta} (
\frac{1}{\beta },\gamma +1)$.
\end{proposition}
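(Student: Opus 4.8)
The plan is to read \eqref{eq:dens2} as an identity between (generalized) densities: by definition, the right-hand side is the probability density of the symmetric random variable $\varepsilon\, V t^{\alpha}$, where $\varepsilon$ is a symmetric Bernoulli sign ($\pm 1$ with probability $1/2$) independent of $V$. So it suffices to compute the law of $\varepsilon\, V t^{\alpha}$ and check that its density equals $u(\cdot ,t)$. Equivalently, I would establish the distributional statement: for every bounded continuous $\phi $,
\[
\int _{\mathbb{R}} u(x,t)\phi (x)\,\mathrm{d}x=\mathbf{E}_{V}\left [\frac{\phi (Vt^{\alpha})+\phi (-Vt^{\alpha})}{2}\right ].
\]

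First I would write down the density of $V\stackrel{\text{(law)}}{=}cY^{1/\beta }$ with $Y\sim \mathrm{Beta}(\tfrac1\beta ,\gamma +1)$. Starting from $f_{Y}(y)=\mathrm{Beta}(\tfrac1\beta ,\gamma +1)^{-1}\,y^{1/\beta -1}(1-y)^{\gamma}\mathbf{1}_{(0,1)}(y)$ and performing the change of variables $v=cy^{1/\beta }$ (so $y=(v/c)^{\beta }$, $\mathrm{d}y=(\beta /c)(v/c)^{\beta -1}\mathrm{d}v$), the factor $(v/c)^{\beta -1}$ cancels against $y^{1/\beta -1}=(v/c)^{1-\beta }$, leaving
\[
f_{V}(v)=\frac{\beta }{c\,\mathrm{Beta}(\tfrac1\beta ,\gamma +1)}\left (1-(v/c)^{\beta }\right )^{\gamma}\mathbf{1}_{(0,c)}(v).
\]
Scaling by $t^{\alpha }$, the density of $Vt^{\alpha }$ at $w\in (0,ct^{\alpha })$ is $t^{-\alpha }f_{V}(wt^{-\alpha })$, and symmetrization replaces $w$ by $|x|$ and introduces the factor $\tfrac12$, giving $\tfrac{\beta }{2c\,t^{\alpha }\,\mathrm{Beta}(\tfrac1\beta ,\gamma +1)}\bigl(1-(|x|/ct^{\alpha })^{\beta }\bigr)_{+}^{\gamma}$.

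Finally I would reconcile the constant. Since $d=1$ we have $\sigma (\mathbb{S}^{0})=2\pi ^{1/2}/\Gamma (1/2)=2$, so the normalization in \eqref{eq:gen} reads $C=\beta /\bigl(2c\,\mathrm{Beta}(\tfrac1\beta ,\gamma +1)\bigr)$; substituting this, the symmetrized density above is exactly $u(x,t)=Ct^{-\alpha }\bigl(1-(|x|/ct^{\alpha })^{\beta }\bigr)_{+}^{\gamma}$, which closes the argument. There is essentially no hard step: it is a one-variable change of variables, and the only points to watch are the cancellation of the power of $v$ and the correct bookkeeping of the support indicator and of the factor $\tfrac12$ coming from the $\pm$ symmetry in \eqref{eq:dens2}. (Alternatively one could verify the identity on the Fourier side, checking $\mathcal{F}u(\xi ,t)=\mathbf{E}_{V}[\cos (Vt^{\alpha }\xi )]$, but the direct computation of the law is shorter and is the route I would take.)
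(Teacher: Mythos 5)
Your argument is correct, and it takes a genuinely different route from the paper. The paper proves the identity on the Fourier side: it computes $\hat{u}(\xi ,t)=\mathcal{F}u(\xi ,t)$ by a change of variables in the integral, recognizes the result as $\mathbf{E}_{V}[\cos (\xi Vt^{\alpha })]=\mathbf{E}_{V}\bigl[\tfrac{1}{2}(e^{i\xi Vt^{\alpha }}+e^{-i\xi Vt^{\alpha }})\bigr]$, and then inverts, using Fubini together with $\mathcal{F}^{-1}e^{\pm ia\xi }=\delta (x\mp a)$ to land on \eqref{eq:dens2}. You instead work directly on the level of laws: you read the right-hand side of \eqref{eq:dens2} as the density of $\varepsilon Vt^{\alpha }$ with $\varepsilon $ an independent symmetric sign, compute $f_{V}$ by a one-variable change of variables from the Beta density (the cancellation $(v/c)^{1-\beta }\cdot (v/c)^{\beta -1}=1$ is exactly right), scale, symmetrize, and match the constant $C=\beta /(2c\,\mathrm{Beta}(\tfrac{1}{\beta },\gamma +1))$ using $\sigma (\mathbb{S}^{0})=2$. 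Your version is more elementary and sidesteps the distributional Fourier inversion entirely; the test-function formulation you state makes the meaning of the delta identity precise. What the paper's Fourier route buys is that it sets up the machinery reused verbatim for $d\geq 2$ (Propositions 2 and 3, via Bessel functions), and it exhibits the characteristic function $\mathbf{E}_{V}[\cos (\xi Vt^{\alpha })]$ explicitly, which the paper then exploits in the remark on Erd\'{e}lyi--Kober integrals. Both proofs are complete for the one-dimensional statement.
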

\begin{proof}
Let $d=1$. We have
\begin{align*}
\hat{u}(\xi ,t)&=\mathcal{F}u(\xi ,t)
\\
&=Ct^{-\alpha }\int _{-ct^{\alpha }}^{ct^{\alpha }}e^{i\xi x}\left (1-
\left (\frac{|x|}{ct^{\alpha }}\right )^{\beta }\right )^{\gamma
}\mathrm{d}x
\\
&=2 Ct^{-\alpha }\int _{0}^{ct^{\alpha }}\cos (\xi x)\left (1-\left (
\frac{x}{ct^{\alpha }}\right )^{\beta }\right )^{\gamma }\mathrm{d}x
\\
&=\frac{\beta /c}{\mathrm{Beta}(\frac{1}{\beta },\gamma +1)}\int _{0}^{c}
\cos (\xi vt^{\alpha })\left (1- (v/c)^{\beta }\right )^{\gamma
}\mathrm{d}v
\\
&=\mathbf{E}_{V}\left [ \cos (\xi Vt^{\alpha })\right ]
\\
&=\mathbf{E}_{V}\left [
\frac{e^{i\xi Vt^{\alpha }}+e^{-i\xi Vt^{\alpha }}}{2}\right ].
\end{align*}
Hence, by Fubini's theorem we immediately obtain
\begin{equation*}
u(x,t)=\mathcal{F}^{-1}\hat{u}(\xi ,t)=\mathbf{E}_{V}\left [
\frac{\delta (x-Vt^{\alpha })+\delta (x+Vt^{\alpha })}{2}\right ],
\end{equation*}
where in the last step we used the  result
$\mathcal{F}^{-1} e^{\pm ia\xi }=\delta (x\mp a), a\in \mathbb{R}$ (see,
e.g., \cite{gr}).
\end{proof}

Based on Proposition \ref{prop1} we argue the following random model. Let
$D$ be a random variable uniformly distributed on $\{-1,1\}$, which is
independent from $V$. We deal with the  stochastic process
$X:=(X(t))_{t\geq 0}$, where
\begin{equation*}
X(t):=D \,V \,t^{\alpha
}\end{equation*}
represents the position, at time $t>0$, of a particle starting from the
origin of the real line, which initially chooses with the same probability
to move leftward or rightward and performs a random displacement of length
equal to $V t^{\alpha }$. Therefore $V$ represents the random velocity of
the particle which is initially fixed with the probability law\index{probability law} $f_{V}$.
\begin{corollary}
At time $t>0$
\begin{equation*}
P(X(t)\in \mathrm{d}x)=\mathbf{E}_{V}\left [
\frac{\delta (x-Vt^{\alpha })+\delta (x+Vt^{\alpha })}{2}\right ]
\mathrm{d}x
\end{equation*}
and the cumulative distribution function of $X(t)$ is given by
\begin{align}
\label{eq:cf}
{\mathbf{F}}(x)=\frac{1}{2}\left [1+\text{sgn}(x)
\frac{\mathrm{Beta}(|x|/ct^{\alpha };\frac{1}{\beta },\gamma +1)}{\mathrm{Beta}(\frac{1}{\beta },\gamma +1)}
\right ],\quad x\in [-ct^{\alpha },ct^{\alpha }].
\end{align}
\end{corollary}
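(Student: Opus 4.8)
The plan is to derive the corollary directly from Proposition~\ref{prop1}. Since $X(t) = D\,V\,t^\alpha$ where $D$ is uniform on $\{-1,1\}$ independent of $V$, conditioning on $V$ gives that $X(t)$ places mass $1/2$ at $Vt^\alpha$ and mass $1/2$ at $-Vt^\alpha$; averaging over $V$ yields exactly $P(X(t)\in\mathrm dx)=\mathbf E_V\big[\tfrac{1}{2}(\delta(x-Vt^\alpha)+\delta(x+Vt^\alpha))\big]\mathrm dx$, which by Proposition~\ref{prop1} equals $u(x,t)\,\mathrm dx$. So the first assertion is immediate.

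For the distribution function, I would compute $\mathbf F(x) = P(X(t)\le x)$ by integrating the density. The cleanest route is to use symmetry: $X(t)$ is a symmetric random variable (since $D$ is sign-symmetric), so $\mathbf F(0)=\tfrac12$ and, for $x>0$, $\mathbf F(x) = \tfrac12 + P(0 < X(t) \le x) = \tfrac12 + \tfrac12 P(0 < |X(t)| \le x)$, using that conditionally on $|X(t)|$ the sign is equiprobable. Now $P(|X(t)|\le x) = \mu_t(B_x)$ in the $d=1$ notation, which by formula \eqref{eq: ball} (with $d=1$, so $d/\beta = 1/\beta$) equals $\mathrm{Beta}\big((x/ct^\alpha)^\beta;\tfrac1\beta,\gamma+1\big)/\mathrm{Beta}(\tfrac1\beta,\gamma+1)$ for $0<x<ct^\alpha$. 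Substituting gives $\mathbf F(x) = \tfrac12\big[1 + \mathrm{Beta}((x/ct^\alpha)^\beta;\tfrac1\beta,\gamma+1)/\mathrm{Beta}(\tfrac1\beta,\gamma+1)\big]$ for $x\in(0,ct^\alpha]$; the case $x<0$ follows by symmetry ($\mathbf F(-x)=1-\mathbf F(x)$), and combining the two with the sign function produces \eqref{eq:cf}. One should double-check the argument of the incomplete Beta function: \eqref{eq:cf} as stated writes $\mathrm{Beta}(|x|/ct^\alpha;\tfrac1\beta,\gamma+1)$, whereas \eqref{eq: ball} naturally produces the argument $(|x|/ct^\alpha)^\beta$; I would verify which normalization is intended (the two agree when $\beta=1$, and for general $\beta$ one can reconcile them via the substitution $y\mapsto y^\beta$ inside the incomplete Beta integral — indeed $\int_0^{s}y^{1/\beta-1}(1-y)^\gamma\mathrm dy = \beta\int_0^{s^{1/\beta}}w^{(1/\beta-1)\beta + \beta -1}(1-w^\beta)^\gamma\,\mathrm dw$, so a careful bookkeeping of exponents is needed).

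Alternatively, and perhaps more transparently, I would compute $\mathbf F$ from scratch by direct integration: $\mathbf F(x) = \int_{-ct^\alpha}^{x} u(y,t)\,\mathrm dy$ for $x\in[-ct^\alpha,ct^\alpha]$, split the integral at $0$ when $x>0$, and use the explicit form of $u$ together with the substitution $w=(y/ct^\alpha)^\beta$ to recognize the incomplete Beta function. This avoids appealing to \eqref{eq: ball} (which was stated for $d\ge 2$) and keeps the one-dimensional case self-contained.

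The main obstacle is purely notational rather than conceptual: reconciling the precise form of the incomplete Beta argument in \eqref{eq:cf} with the integral that actually arises, and tracking the exponent $\beta$ correctly through the change of variables. There is no real analytic difficulty — the first claim is a one-line consequence of Proposition~\ref{prop1}, and the second is an elementary integration exploiting the symmetry $x\mapsto -x$ of the law of $X(t)$.
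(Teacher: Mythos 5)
Your proposal is correct and follows essentially the same route as the paper: condition on $V$ to obtain the mixture of point masses for the first identity, then combine the symmetry of $X(t)$ with the ball-probability formula \eqref{eq: ball} (specialized to $d=1$) to get the distribution function. The discrepancy you flag is real: carrying the substitution through gives the argument $(|x|/ct^{\alpha})^{\beta}$ in the incomplete Beta function, consistent with \eqref{eq: ball} and with $V\stackrel{\text{(law)}}{=}cY^{1/\beta}$, so \eqref{eq:cf} as printed omits the exponent $\beta$.
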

\begin{proof}
Given $V=v$, one has that
\begin{align*}
P(X(t)\in A|V=v)&=\int _{A}\nu _{v}(x,t)\mathrm{d}x,\quad A\in
\mathcal{B}(\mathbb{R}),
\end{align*}
where
\begin{equation*}
\nu _{v}(x,t):=
\frac{\delta (x-vt^{\alpha })+\delta (x+vt^{\alpha })}{2}
\end{equation*}
represents the singular probability measure of $X_{1}$ (w.r.t. the Lebesgue
measure $\mathrm{d}x$), and then
\begin{align*}
\mu _{t}(\mathrm{d}x)&=P(X(t)\in \mathrm{d}x)=\mathbf{E}_{V}\left [
\nu _{V}(x,t)\right ]\mathrm{d}x.
\end{align*}

Some simple calculations and the result \eqref{eq: ball} lead to
\eqref{eq:cf}.
\end{proof}

The following corollary highlights the link between $u(x,t)$ and a model
of nonlinear wave propagation.
\begin{corollary}\label{c3}
Let $v>0$. Then
\begin{equation*}
\nu _{v}(x,t):=
\frac{\delta (x-vt^{\alpha })+\delta (x+vt^{\alpha })}{2}
\end{equation*}
is the fundamental solution\index{fundamental solution} to the  hyperbolic EPD-type partial
differential equation
\begin{equation}
\label{eq:wavesol}
\frac{\partial ^{2} u}{\partial t^{2}}+\frac{(1-\alpha )}{t}
\frac{\partial u}{\partial t}=v^{2}\alpha ^{2} t^{2\alpha -2}
\frac{\partial ^{2} u}{\partial x^{2}}
\end{equation}
subject to the initial conditions
$u(x,0)=\delta (x), \frac{\partial u}{\partial t}(x,t)\big |_{t=0}=0$.
\end{corollary}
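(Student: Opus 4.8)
The plan is to pass to the Fourier transform in the space variable $x$, which converts the hyperbolic equation \eqref{eq:wavesol} into a linear second-order ordinary differential equation in $t$ with $\xi$ as a parameter, and then to check that $\mathcal F\nu_v(\cdot,t)$ is precisely the solution of that ODE carrying the prescribed initial data; inverting the transform then returns $\nu_v$. Since $\nu_v(\cdot,t)$ is the law of a random variable taking the values $+vt^\alpha$ and $-vt^\alpha$ with probability $1/2$ each, its Fourier transform is
\begin{equation*}
\hat\nu_v(\xi,t)=\frac{e^{i\xi vt^\alpha}+e^{-i\xi vt^\alpha}}{2}=\cos(\xi v t^\alpha),
\end{equation*}
which is also immediate from the identity $\mathcal F^{-1}e^{\pm ia\xi}=\delta(x\mp a)$ used in the proof of Proposition~\ref{prop1}. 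Applying $\mathcal F$ to \eqref{eq:wavesol} and using $\mathcal F(\partial_x^2 u)=-\xi^2\hat u$, the claim reduces to showing that $g(t):=\cos(\xi vt^\alpha)$ solves
\begin{equation*}
g''(t)+\frac{1-\alpha}{t}\,g'(t)+v^2\alpha^2\xi^2\,t^{2\alpha-2}\,g(t)=0,
\end{equation*}
with $g(0)=1=\mathcal F\delta$ and, in the appropriate sense, $g'(0^+)=0$.

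The verification of the ODE is a direct differentiation. Writing $\theta(t):=\xi vt^\alpha$, one has $g'(t)=-\alpha\xi v\,t^{\alpha-1}\sin\theta(t)$ and
\begin{equation*}
g''(t)=-\alpha^2\xi^2v^2\,t^{2\alpha-2}\cos\theta(t)-\alpha(\alpha-1)\xi v\,t^{\alpha-2}\sin\theta(t).
\end{equation*}
Since $\frac{1-\alpha}{t}\,g'(t)=\alpha(\alpha-1)\xi v\,t^{\alpha-2}\sin\theta(t)$, the two terms containing $\sin\theta(t)$ cancel and there remains exactly $-\alpha^2\xi^2v^2\,t^{2\alpha-2}\cos\theta(t)=-v^2\alpha^2\xi^2 t^{2\alpha-2}g(t)$, i.e. the required equation. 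As a sanity check, for $\alpha=1$ this is the free equation $g''=-v^2\xi^2 g$, whose solution $\cos(v\xi t)$ inverts to the D'Alembert fundamental solution of $u_{tt}=v^2u_{xx}$, consistently with \eqref{eq:wavesol} reducing to the wave equation when $\alpha=1$. For the first initial condition, $g(0)=\cos 0=1=\mathcal F\delta$, while $\hat\nu_v(\xi,t)\to1$ as $t\to0^+$ yields $\nu_v(\cdot,t)\Rightarrow\delta$, that is $u(x,0)=\delta(x)$ in the weak sense.

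It then remains to invert the Fourier transform: $\mathcal F^{-1}\hat\nu_v$ returns $\nu_v(x,t)=\frac{1}{2}\big(\delta(x-vt^\alpha)+\delta(x+vt^\alpha)\big)$, which therefore solves \eqref{eq:wavesol} with the stated data in the distributional sense, and one invokes the uniqueness theory for Euler--Poisson--Darboux-type equations (cf. \cite{br}) to conclude it is \emph{the} fundamental solution. The step I expect to be the main obstacle is making rigorous the second initial condition $\frac{\partial u}{\partial t}(x,t)\big|_{t=0}=0$ when $\alpha\le\frac{1}{2}$: there $g'(t)\sim-\alpha\xi^2v^2\,t^{2\alpha-1}$ does not vanish (indeed it may blow up) as $t\to0^+$, so the condition should be read through the natural weighted time derivative $t^{1-\alpha}\frac{\partial u}{\partial t}$, which does tend to $0$ — in keeping with the singular coefficient $\frac{1-\alpha}{t}$ already present in \eqref{eq:wavesol}; the expansion $g(t)=1-\frac{1}{2}\xi^2v^2t^{2\alpha}+o(t^{2\alpha})$ makes this flatness at the origin explicit. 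For $\alpha>\frac{1}{2}$, in particular the EPD case $\alpha=1$, the condition holds pointwise without any reinterpretation.
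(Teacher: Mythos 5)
Your verification is correct, but it follows a genuinely different route from the paper. The paper's proof is a one-line reduction: it observes that $\frac{\delta(x-vs)+\delta(x+vs)}{2}$ is the fundamental solution of the classical wave equation $\partial_s^2 u = v^2\partial_x^2 u$ and then performs the time change $s=t^\alpha$, which by the chain rule converts that equation exactly into \eqref{eq:wavesol} (the term $\frac{1-\alpha}{t}\partial_t u$ arising from $\partial_t^2 u=\alpha(\alpha-1)t^{\alpha-2}\partial_s w+\alpha^2 t^{2\alpha-2}\partial_s^2 w$). You instead take the spatial Fourier transform, reduce \eqref{eq:wavesol} to the ODE $g''+\frac{1-\alpha}{t}g'+v^2\alpha^2\xi^2 t^{2\alpha-2}g=0$ for $g(t)=\cos(\xi v t^\alpha)$, and check it by direct differentiation; the computation is right, and it is consistent with the Fourier computation already carried out in the proof of Proposition~\ref{prop1}. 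The paper's change of variables is shorter and makes the structural point of the corollary (the link with wave propagation on the deformed time scale $t^\alpha$) completely transparent; your approach buys an explicit, self-contained verification and, more importantly, surfaces a genuine subtlety the paper glosses over: for $\alpha\le\frac12$ the condition $\partial_t u|_{t=0}=0$ cannot hold pointwise, since $g'(t)\sim-\alpha\xi^2v^2t^{2\alpha-1}$, and must be read through the weighted derivative $t^{1-\alpha}\partial_t u\to 0$ — which is precisely the natural initial condition for an equation with the singular coefficient $\frac{1-\alpha}{t}$, and is also what the paper's own substitution $s=t^\alpha$ implicitly encodes via $\partial_s w|_{s=0}=0$. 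Your appeal to uniqueness for EPD-type problems to justify the article ``the'' fundamental solution is a reasonable addition that the paper does not make explicit either.
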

\begin{proof}
We observe that $\frac{\delta (x-vs)+\delta (x+vs)}{2}, v>0,s>0$, is the
fundamental solution\index{fundamental solution} to the wave equation
\begin{equation*}
\frac{\partial ^{2} u}{\partial s^{2}}=v^{2}
\frac{\partial ^{2} u}{\partial x^{2}},
\end{equation*}
subject to the initial condition
$u(x,0)=\delta (x), \frac{\partial u}{\partial s}(x,s)\big |_{s=0}=0$. Therefore,
the change of variable $s=t^{\alpha }$ and direct calculations permit to
prove \eqref{eq:wavesol}.
\end{proof}

\begin{remark}
The Erd\'{e}lyi--Kober fractional integral is defined as (see, e.g.,
\cite{gianni})
\begin{equation}
\label{eq:eki}
I^{\zeta , \mu }_{\eta }f(x) =
\frac{\eta \ x^{-\eta (\mu +\zeta )}}{\Gamma (\mu )}\int _{0}^{x}
\tau ^{\eta (\zeta +1)-1}(x^{\eta }-\tau ^{\eta })^{\mu -1}f(\tau )
\mathrm{d}\tau ,
\end{equation}
where $\mu >0,\eta >0$ and $\zeta \in \mathbb{R}$. Evidently, for
$\zeta =0,\eta =1$, \eqref{eq:eki} reduces to the Riemann--Liouville integral
with a power weight.

From Proposition \ref{prop1} it is easy to check that the Fourier transform
$\hat{u}(\xi ,t)$ and $u(x,t)$ are Erd\'{e}lyi--Kober integrals of the cosine
function
$f_{\alpha }(v;\xi ,t):=\cos \left (\xi v t^{\alpha } \right )$ and
$g_{\alpha }(v;x,t):=
\frac{\delta (x-vt^{\alpha } )+\delta (x+vt^{\alpha } )}{2}$, respectively;
i.e.,
\begin{align}
\hat{u}(\xi ,t)& =
\frac{\Gamma (\frac{1}{\beta }+\gamma +1)}{\Gamma (\frac{1}{\beta })}I_{\beta }^{
\frac{1}{\beta }-1,\gamma +1}f_{\alpha }(c;\xi ,t)\end{align}
and
\begin{align}
u(x,t) &=
\frac{\Gamma (\frac{1}{\beta }+\gamma +1)}{\Gamma (\frac{1}{\beta })}I_{\beta }^{
\frac{1}{\beta }-1,\gamma +1}g_{\alpha }(c;x,t).
\end{align}
A recent interesting probabilistic interpretation\index{probabilistic interpretation} of the Erd\'{e}lyi--Kober integral
is also discussed in \cite{Tarasov}.

\end{remark}
\begin{remark}
The (centred) Wigner law is defined by the  probability distribution
\begin{equation}
\label{eq:wigner}
\mathfrak{m} (x,t)=\frac{1}{2\pi t}\sqrt{4t-x^{2}},\quad |x|\leq 2
\sqrt{t}.
\end{equation}
A simple calculation proves that the even moments are given by (scaled) Catalan
numbers, that is,
\begin{equation*}
\int _{-2\sqrt{t}}^{2\sqrt{t}}x^{2m}\mathfrak{m} ( x,t)\mathrm{d}x=C_{m}
t^{m}, \quad m\in \mathbb{N},
\end{equation*}
with $C_{m}=\frac{1}{m+1}\binom{2m}{m}$. The probability law\index{probability law}
\eqref{eq:wigner} is the density function\index{density function} of the free Brownian motion\index{Brownian motion}
$S:=(S_{t})_{t\geq 0}$, i.e. for $0\leq t_{1}<t_{2}<\infty $, the law of
$S_{t_{2}}-S_{t_{1}}$ is given by $ \mathfrak{m} (x,t_{2}-t_{1})$ and
$\mathbf{E}(S_{t_{2}}-S_{t_{1}})=0$,
$\mathbf{E}(S_{t_{2}}-S_{t_{1}})^{2}=t_{2}-t_{1}$. For a, detailed introduction
to the free probability and free Brownian motion\index{Brownian motion} the reader can consult,
for instance, \cite{voi,voi2} and \cite{nica}.

By setting $d=1,\alpha =1/2,\beta =2, \gamma =1/2$ and $c=2$, the function
\eqref{eq:gen} coincides with $ \mathfrak{m} (x,t)$. Furthermore, we observe
that $\mathfrak{m}:=\mathfrak{m} ( x,t)$ is equal to the time-rescaled, with
$t=s^{2}$, solution to the EPD equation \eqref{epd}, given by
\begin{equation}
\frac{\partial ^{2} u}{\partial s^{2}}+\frac{1}{s}
\frac{\partial u}{\partial s} = 4
\frac{ \partial ^{2} u}{\partial s^{2}}, \quad   s>0.
\end{equation}
Therefore some simple calculations allow to deduce
\begin{equation*}
\frac{\partial ^{2} \mathfrak{m}}{\partial t^{2}}+\frac{1}{2t}
\frac{\partial \mathfrak{m}}{\partial t}=\frac{1}{t}
\frac{\partial ^{2} \mathfrak{m}}{\partial x^{2}}.
\end{equation*}
\end{remark}

The study of the Barenblatt-type solutions for $d\geq 2$ leads to the following
alternative representations of \eqref{eq:gen}.

\begin{proposition}\label{propcf}
For $d\geq 2$, the probability density functions\index{probability density function} \eqref{eq:gen} have the
 representation
\begin{align}
\label{eq:dfd1}
u(x,t)=
\frac{\mathrm{Beta}(\frac{1}{\beta }(\frac{d}{2}+1),\gamma +1)}{\sigma (\mathbb{S}^{d-1})(ct^{\alpha }\|x\|)^{\frac{d}{2}-1}\mathrm{Beta}(\frac{d}{\beta },\gamma +1)}{
\mathbf{E}}_{ Z}\left [\delta (\|x\|- Z t^{\alpha })\right ]
\end{align}
where $\mathbf{E}_{Z}[\cdot ]$ stands for the mean value w.r.t.
$Z\stackrel{\text{(law)}}{=}cY_{1}^{1/\beta }$ where $Y_{1}\sim \mathrm{Beta}(
\frac{1}{\beta }(\frac{d}{2}+1),\gamma +1)$.
\end{proposition}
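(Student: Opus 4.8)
The plan is to follow the proof of Proposition~\ref{prop1}: compute the Fourier transform $\hat{u}(\xi,t)=\mathcal{F}u(\xi,t)$, recognise it as an expectation, and then invert. The feature that makes $d\geq 2$ different is that the Fourier transform of a rotationally invariant function is a Hankel-type transform governed by the Bessel function $J_{d/2-1}$. Write $p:=\frac{1}{\beta}(\frac{d}{2}+1)$ and $q:=\frac{d}{\beta}$. Concretely, I would start from the classical formula for the Fourier transform of the uniform surface measure on the sphere of radius $\rho$,
\[
\int_{\mathbb{S}^{d-1}} e^{i\rho\,\omega\cdot\xi}\,\mathrm{d}\sigma(\omega)=(2\pi)^{d/2}(\rho\|\xi\|)^{-(d/2-1)}J_{d/2-1}(\rho\|\xi\|),
\]
whose value at $\xi=0$ matches $\sigma(\mathbb{S}^{d-1})=2\pi^{d/2}/\Gamma(d/2)$ via the small-argument behaviour of the Bessel function. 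Writing \eqref{eq:gen} in polar coordinates and rescaling $r=ct^{\alpha}v$ then turns $\hat{u}(\xi,t)$ into a constant multiple of $\|\xi\|^{-(d/2-1)}\int_{0}^{1} v^{d/2}(1-v^{\beta})^{\gamma}J_{d/2-1}(ct^{\alpha}v\|\xi\|)\,\mathrm{d}v$.

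Next I would identify the weight $v^{d/2}(1-v^{\beta})^{\gamma}$ on $(0,1)$: after the substitution $y=v^{\beta}$ the measure $v^{d/2}\,\mathrm{d}v$ becomes proportional to the $\mathrm{Beta}(p,\gamma+1)$ density (the ``$+1$'' in $p$ coming precisely from the Jacobian of this power substitution, the ``$d/2$'' from the radial volume element), which is the law of $(Z/c)^{\beta}$ for $Z\stackrel{\text{(law)}}{=}cY_{1}^{1/\beta}$, $Y_{1}\sim\mathrm{Beta}(p,\gamma+1)$. Hence the above integral equals a constant times $\mathbf{E}_{Z}[J_{d/2-1}(Zt^{\alpha}\|\xi\|)]$, and collecting all the constants — in particular the normalising factor $C$ of \eqref{eq:gen} and $\sigma(\mathbb{S}^{d-1})$ — gives
\[
\hat{u}(\xi,t)=\frac{(2\pi)^{d/2}\,\mathrm{Beta}(p,\gamma+1)}{\sigma(\mathbb{S}^{d-1})\,\mathrm{Beta}(q,\gamma+1)}\,(ct^{\alpha})^{-(d/2-1)}\|\xi\|^{-(d/2-1)}\,\mathbf{E}_{Z}[J_{d/2-1}(Zt^{\alpha}\|\xi\|)].
\]

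To invert I would read the surface-measure formula backwards: since $\mathcal{F}[\delta(\|\cdot\|-\rho)](\xi)=(2\pi)^{d/2}\rho^{d/2}\|\xi\|^{-(d/2-1)}J_{d/2-1}(\rho\|\xi\|)$, one has $\mathcal{F}^{-1}[\|\xi\|^{-(d/2-1)}J_{d/2-1}(\rho\|\xi\|)](x)=(2\pi)^{-d/2}\rho^{-d/2}\,\delta(\|x\|-\rho)$. Taking $\rho=Zt^{\alpha}$, interchanging $\mathcal{F}^{-1}$ with $\mathbf{E}_{Z}$ by Fubini's theorem (as in the proof of Proposition~\ref{prop1}), and using the identity $g(Z)\,\delta(\|x\|-Zt^{\alpha})=g(\|x\|/t^{\alpha})\,\delta(\|x\|-Zt^{\alpha})$ to pull the factor $(Zt^{\alpha})^{-d/2}=\|x\|^{-d/2}$ out of the expectation, one arrives at \eqref{eq:dfd1} after simplifying the remaining powers of $c$, $t^{\alpha}$, $\|x\|$ and the Gamma functions.

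The bookkeeping of constants is routine; the delicate point is this last step, since for each fixed value of $Z$ the object being inverted is a tempered distribution rather than a function, so the interchange with $\mathbf{E}_{Z}$ and the factoring of $(Zt^{\alpha})^{-d/2}$ through the $\delta$ should be made precise by pairing with Schwartz functions — equivalently, by first establishing the identity at the level of the radial laws, where \eqref{eq: ball} already supplies the radial distribution of $\mu_{t}$. An alternative that avoids Bessel functions altogether is to note, exactly as in the Corollary to Proposition~\ref{prop1}, that $\mathbf{E}_{Z}[\delta(\|x\|-Zt^{\alpha})]=t^{-\alpha}f_{Z}(\|x\|/t^{\alpha})$ with $f_{Z}$ the explicit density of $Z$ (obtained by a change of variables from $Y_{1}$), and then verify \eqref{eq:dfd1} as a direct algebraic identity; but it is the Fourier computation that explains why the relevant Beta parameter is $p=\frac{1}{\beta}(\frac{d}{2}+1)$ rather than $\frac{d}{\beta}$.
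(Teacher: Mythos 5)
Your computation of $\hat u(\xi,t)$ is exactly the paper's: both arguments pass to polar coordinates, apply the surface-measure formula $\int_{\mathbb{S}^{d-1}}e^{i\rho\,\xi\cdot\theta}\,\mathrm{d}\sigma(\theta)=(2\pi)^{d/2}(\rho\|\xi\|)^{-(d/2-1)}J_{d/2-1}(\rho\|\xi\|)$, rescale, and recognise the weight $v^{d/2}(1-v^{\beta})^{\gamma}$ as the $\mathrm{Beta}(p,\gamma+1)$ law of $(Z/c)^{\beta}$; your intermediate expression for $\hat u$ agrees with the paper's \eqref{eq:cf1} since $(2\pi)^{d/2}/\sigma(\mathbb{S}^{d-1})=2^{d/2-1}\Gamma(d/2)$. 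Where you genuinely diverge is the inversion: the paper writes $\mathcal{F}^{-1}$ in polar coordinates a second time, which produces the product integral $\int_{0}^{\infty}\rho\,J_{d/2-1}(\rho z t^{\alpha})J_{d/2-1}(\rho\|x\|)\,\mathrm{d}\rho$ and evaluates it distributionally by citing the Weber--Schafheitlin result of Kellendonk and Richard, whereas you read the surface-measure formula backwards to get $\mathcal{F}^{-1}\bigl[\|\xi\|^{-(d/2-1)}J_{d/2-1}(\rho\|\xi\|)\bigr](x)=(2\pi)^{-d/2}\rho^{-d/2}\delta(\|x\|-\rho)$ directly. The two are equivalent in content (your identity is the Hankel closure relation in disguise), but yours avoids the oscillatory product integral and is the cleaner route.

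The one point you must repair is the final assertion that you ``arrive at \eqref{eq:dfd1}'': carried out honestly, your bookkeeping does not land there. From your own displayed $\hat u$ and your inversion formula, pulling $(Zt^{\alpha})^{-d/2}=\|x\|^{-d/2}$ through the delta gives
\begin{equation*}
u(x,t)=\frac{\mathrm{Beta}(p,\gamma+1)}{\sigma(\mathbb{S}^{d-1})\,(ct^{\alpha})^{d/2-1}\,\|x\|^{d/2}\,\mathrm{Beta}(q,\gamma+1)}\,\mathbf{E}_{Z}\bigl[\delta(\|x\|-Zt^{\alpha})\bigr],
\end{equation*}
with exponent $d/2$ on $\|x\|$, not $d/2-1$ as in \eqref{eq:dfd1}. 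Do not ``simplify'' this into the printed display. The elementary check you yourself propose settles which version is right: $\mathbf{E}_{Z}[\delta(\|x\|-Zt^{\alpha})]=t^{-\alpha}f_{Z}(\|x\|/t^{\alpha})$ with $f_{Z}(z)=\frac{\beta/c}{\mathrm{Beta}(p,\gamma+1)}(z/c)^{d/2}(1-(z/c)^{\beta})^{\gamma}$, and substituting this into the right-hand side of \eqref{eq:dfd1} yields $\|x\|\,u(x,t)$ rather than $u(x,t)$ (for instance, with $d=2$, $\beta=2$, $\gamma=1$, $c=t=1$ it gives $\frac{2}{\pi}\|x\|(1-\|x\|^{2})$ against $u=\frac{2}{\pi}(1-\|x\|^{2})$). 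So your $\|x\|^{d/2}$ version is the consistent one; the discrepancy in the paper's proof traces to using $\int_{0}^{\infty}\rho J_{\nu}(\rho a)J_{\nu}(\rho b)\,\mathrm{d}\rho=\delta(a-b)$ without the weight $1/a$ that the closure relation carries. State your result with the corrected exponent and flag the mismatch instead of claiming to recover the printed formula.
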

\begin{proof}
Let $d\geq 2$. Let $\sigma $ be the measure on $\mathbb{S}^{d-1}$. We recall
that (see (2.12), \xch{p.~690}{pag.690}, \cite{dgo}),
\begin{equation}
\label{eq:int}
\int _{\mathbb{S}^{d-1}}e^{i\rho \xi \cdot \theta } \mathrm{d}\sigma ({
\theta })=(2\pi )^{d/2}
\frac{J_{\frac{d}{2}-1}(\rho \|\xi \|)}{(\rho \|\xi \|)^{\frac{d}{2}-1}}.
\end{equation}
One has that
\begin{align}
\label{eq:cf1}
\hat{u}(\xi ,t)&=\mathcal{F}u(\xi ,t)
\nonumber\\
&=C\int _{0}^{ct^{\alpha }}\rho ^{d-1} t^{-\alpha d}\left (1-\left (
\frac{\rho }{ct^{\alpha }}\right )^{\beta }\right )^{\gamma }
\mathrm{d}\rho \int _{\mathbb{S}^{d-1}}e^{i\rho \xi \cdot \theta }
\mathrm{d}\sigma (\theta)
\nonumber
\\
&=\frac{t^{-\alpha d}C(2\pi )^{d/2}}{\|\xi \|^{\frac{d}{2}-1}}\int _{0}^{ct^{
\alpha }}\rho ^{\frac{d}{2}} \left (1-\left (\frac{\rho }{ct^{\alpha }}
\right )^{\beta }\right )^{\gamma }J_{\frac{d}{2}-1}(\rho \|\xi \|)
\mathrm{d}\rho
\nonumber
\\
&=\left (\frac{2}{ct^{\alpha }\|\xi \|}\right )^{\frac{d}{2}-1}
\frac{\Gamma (d/2)\beta /c}{\mathrm{Beta}(\frac{d}{\beta },\gamma +1)}
\int _{0}^{c} (z/c)^{\frac{d}{2}}(1-(z/c)^{\beta })^{\gamma }J_{
\frac{d}{2}-1}\left (t^{\alpha }z\|\xi \|\right )\mathrm{d}z
\nonumber
\\
&=\left (\frac{2}{ct^{\alpha }\|\xi \|}\right )^{\frac{d}{2}-1}
\frac{\Gamma (d/2)\mathrm{Beta}(\frac{1}{\beta }(\frac{d}{2}+1),\gamma +1)}{\mathrm{Beta}(\frac{d}{\beta },\gamma +1)}
{\mathbf{E}}_{ Z}\left [J_{\frac{d}{2}-1}\left (Zt^{\alpha } \|\xi \|\right )
\right ],
\end{align}
where $Z\stackrel{\text{(law)}}{=}cY^{1/\beta }$ is the random variable
with the density function\index{density function} given by
\begin{equation*}
f_{Z}(z)=
\frac{\beta /c}{\mathrm{Beta}(\frac{1}{\beta }(\frac{d}{2}+1),\gamma +1)}(z/c)^{
\frac{d}{2}}(1-(z/c)^{\beta })^{\gamma }1_{0<z<c}.
\end{equation*}
Hence, by Fubini's theorem we obtain
\begin{align*}
u(x,t)&=\mathcal{F}^{-1}u(\xi , t)
\\
&=\frac{1}{(2\pi )^{d}}\int _{\mathbb{R}^{d}}e^{- i x\cdot \xi }u(\xi ,
t) \mathrm{d}\xi
\\
&=(\text{by passing to spherical coordinates})
\\
&=\frac{1}{(2\pi )^{d}}\left (\frac{2}{ct^{\alpha }}\right )^{\frac{d}{2}-1}
\frac{\Gamma (d/2)\beta /c}{\mathrm{Beta}(\frac{d}{\beta },\gamma +1)}\\
&\quad \times \int _{0}^{c} (z/c)^{\frac{d}{2}}(1-(z/c)^{\beta })^{
\gamma }\mathrm{d}z
\\
&\quad\times\int _{0}^{\infty }\rho ^{d/2} J_{\frac{d}{2}-1}\left (zt^{\alpha }\rho \right )\left (\int _{\mathbb{S}^{d-1}}e^{-i \rho x\cdot
\theta } \mathrm{d}\sigma (\theta ) \right )\mathrm{d}\rho
\\
&=(\text{by explotinig the result}\, \text{\eqref{eq:int}})
\\
&=\frac{1}{2\pi ^{d/2}}
\frac{\Gamma (d/2)}{(ct^{\alpha }\|x\|)^{\frac{d}{2}-1}}
\frac{\beta /c}{\mathrm{Beta}(\frac{d}{\beta },\gamma +1)}
\\
&\quad \times \int _{0}^{c} (z/c)^{\frac{d}{2}}(1-(z/c)^{\beta })^{
\gamma }\mathrm{d}z\int _{0}^{\infty }\rho J_{\frac{d}{2}-1}\left (\rho t^{
\alpha } u\right )J_{\frac{d}{2}-1 }(\rho \|x\|)\mathrm{d}\rho
\\
&=\frac{1}{2\pi ^{d/2}}
\frac{\Gamma (d/2)}{(ct^{\alpha }\|x\|)^{\frac{d}{2}-1}}
\frac{\beta /c}{\mathrm{Beta}(\frac{d}{\beta },\gamma +1)}\\
&\quad \times\int _{0}^{c} (z/c)^{
\frac{d}{2}}(1-(z/c)^{\beta })^{\gamma }\delta (\|x\|- zt^{\alpha })
\mathrm{d}z
\\
&=(\text{by Proposition 2, in \cite{kell}})
\\
&=\frac{1}{2\pi ^{d/2}}
\frac{\Gamma (d/2)}{(ct^{\alpha }\|x\|)^{\frac{d}{2}-1}}
\frac{\mathrm{Beta}(\frac{1}{\beta }(\frac{d}{2}+1),\gamma +1)}{\mathrm{Beta}(\frac{d}{\beta },\gamma +1)}{
\mathbf{E}}_{ Z}\left [\delta (\|x\|- Z t^{\alpha })\right ]
\end{align*}
which coincides with \eqref{eq:dfd1}.
\end{proof}

\begin{remark}

For $d=2$, the representation \eqref{eq:dfd1} is particularly simple and
reads as
\begin{align*}
u(x,t)=\frac{1}{2\pi }{\mathbf{E}}_{ Z}\left [\delta (\|x\|- Z t^{\alpha })
\right ].
\end{align*}
\end{remark}

Let
\begin{equation*}
g_{uw}(x,t):=\mathcal{F}^{-1} e^{i \|\xi \| u w t}=
\frac{1}{(2\pi )^{d}}\int _{\mathbb{R}^{d}} e^{-i x\cdot \xi } e^{i \|
\xi \| u w t}\mathrm{d}\xi _{d} .
\end{equation*}
We are able to provide an alternative representation of $u(x,t)$ with respect
to \eqref{eq:dfd1}.
\begin{proposition}
For $d\geq 2$, the law \eqref{eq:gen} can be rewritten as
\begin{align}
\label{eq:dfd2}
u(x,t)=\mathbf{E}_{\mathfrak{U} \,\mathfrak{W}}\left [
\frac{g_{\mathfrak{U}\,\mathfrak{W}}(x,t^{\alpha })+g_{\mathfrak{U}\,\mathfrak{W}}(x,-t^{\alpha })}{2}
\right ]
\end{align}
where $\mathfrak{U}\stackrel{\text{(law)}}{=}cY_{2}^{1/\beta }$, with
$Y_{2}\sim \mathrm{Beta}(\frac{d}{\beta },\gamma +1)$, and $\mathfrak{W}$ admits the density
function\index{density function} given by
$f_{\mathfrak{W}}(w)=\frac{2}{\mathrm{Beta}(\frac{1}{2},\frac{d-1}{2})}(1-w^{2})_{+}^{
\frac{d-1}{2}-1}$. Moreover $\mathfrak{U}$ and $\mathfrak{W}$ are independent.
\end{proposition}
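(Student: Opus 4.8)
The plan is to mimic the proof of Proposition~\ref{propcf} and argue entirely on the Fourier side, showing that the Fourier transform (in $x$, for fixed $t$) of the right-hand side of \eqref{eq:dfd2} coincides with $\hat u(\xi,t)=\mathcal{F}u(\xi,t)$. By the very definition of $g_{uw}$ one has $\mathcal{F}g_{uw}(\cdot,t)(\xi)=e^{i\|\xi\|uwt}$, so
\[
\mathcal{F}\!\left[\frac{g_{\mathfrak{U}\mathfrak{W}}(\cdot,t^{\alpha})+g_{\mathfrak{U}\mathfrak{W}}(\cdot,-t^{\alpha})}{2}\right](\xi)=\frac{e^{i\|\xi\|\mathfrak{U}\mathfrak{W}t^{\alpha}}+e^{-i\|\xi\|\mathfrak{U}\mathfrak{W}t^{\alpha}}}{2}=\cos\!\big(\|\xi\|\,\mathfrak{U}\,\mathfrak{W}\,t^{\alpha}\big).
\]
Taking the mean value, exchanging it with $\mathcal{F}$ by Fubini's theorem (the integrand $\cos$ being bounded), and using the independence of $\mathfrak{U}$ and $\mathfrak{W}$, it then suffices to establish
\[
\hat u(\xi,t)=\mathbf{E}_{\mathfrak{U}}\Big[\mathbf{E}_{\mathfrak{W}}\big[\cos(\|\xi\|\,\mathfrak{U}\,\mathfrak{W}\,t^{\alpha})\big]\Big].
\]

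To evaluate the inner expectation I would invoke the Poisson integral representation of the Bessel function,
\[
J_{\frac{d}{2}-1}(z)=\frac{(z/2)^{\frac{d}{2}-1}}{\Gamma(\tfrac12)\,\Gamma(\tfrac{d-1}{2})}\int_{-1}^{1}\cos(zw)\,(1-w^{2})^{\frac{d-1}{2}-1}\,\mathrm{d}w,
\]
valid for $d\geq2$ since then $\frac{d-1}{2}-1>-1$. Using $\Gamma(\tfrac12)\Gamma(\tfrac{d-1}{2})=\mathrm{Beta}(\tfrac12,\tfrac{d-1}{2})\,\Gamma(\tfrac d2)$ together with the explicit density of $\mathfrak{W}$, this yields, for $z=\mathfrak{U}t^{\alpha}\|\xi\|$,
\[
\mathbf{E}_{\mathfrak{W}}\big[\cos(z\mathfrak{W})\big]=2^{\frac{d}{2}-1}\,\Gamma(\tfrac d2)\,\frac{J_{\frac{d}{2}-1}(z)}{z^{\frac{d}{2}-1}}.
\]
It then remains to average over $\mathfrak{U}$, whose density is $f_{\mathfrak{U}}(u)=\frac{\beta/c}{\mathrm{Beta}(\frac d\beta,\gamma+1)}(u/c)^{d-1}\big(1-(u/c)^{\beta}\big)^{\gamma}1_{0<u<c}$, and to compare with the intermediate form of $\hat u(\xi,t)$ in \eqref{eq:cf1}: after the substitution $\rho=ut^{\alpha}$ that line reads
\[
\hat u(\xi,t)=C\,(2\pi)^{d/2}\int_{0}^{c}u^{d-1}\big(1-(u/c)^{\beta}\big)^{\gamma}\,\frac{J_{\frac{d}{2}-1}(ut^{\alpha}\|\xi\|)}{(ut^{\alpha}\|\xi\|)^{\frac{d}{2}-1}}\,\mathrm{d}u.
\]
Substituting $C=\beta\big(c^{d}\sigma(\mathbb{S}^{d-1})\,\mathrm{Beta}(\tfrac d\beta,\gamma+1)\big)^{-1}$ and $\sigma(\mathbb{S}^{d-1})=2\pi^{d/2}/\Gamma(d/2)$ shows $C(2\pi)^{d/2}=\beta\,2^{d/2-1}\Gamma(d/2)\big(c^{d}\,\mathrm{Beta}(\tfrac d\beta,\gamma+1)\big)^{-1}$, which is precisely the multiplicative constant produced by the double expectation. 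Hence $\mathcal{F}$ of the right-hand side of \eqref{eq:dfd2} equals $\hat u(\xi,t)$, and \eqref{eq:dfd2} follows by the injectivity of the Fourier transform.

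The delicate step is not any of these computations but the interpretative framework: $g_{uw}(\cdot,t)$ is not an integrable function — its Fourier transform has constant modulus $1$ — hence it is a genuine tempered distribution, and the exchanges of $\mathbf{E}$ with $\mathcal{F}$ and $\mathcal{F}^{-1}$, together with the final identification, have to be read in the distributional sense, exactly as the delta functions are treated formally in Propositions~\ref{prop1} and \ref{propcf}. Once this is agreed upon, the only genuinely analytic ingredient is the Poisson representation of $J_{d/2-1}$, the rest being the bookkeeping of Beta-function constants sketched above.
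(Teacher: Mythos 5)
Your argument is correct and follows essentially the same route as the paper: both reduce the claim to the identity $\hat u(\xi,t)=\mathbf{E}_{\mathfrak{U}\,\mathfrak{W}}[\cos(\|\xi\|\,\mathfrak{U}\,\mathfrak{W}\,t^{\alpha})]$ by applying the Poisson integral representation \eqref{eq:poisson} of $J_{\frac{d}{2}-1}$ to the intermediate expression \eqref{eq:cf1}, identifying the resulting weights with the Beta densities of $\mathfrak{U}$ and $\mathfrak{W}$, and then inverting the Fourier transform in the distributional sense. The only difference is the direction (you transform the right-hand side and match it to $\hat u$, whereas the paper rewrites $\hat u$ and applies $\mathcal{F}^{-1}$), and your constant bookkeeping checks out.
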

\begin{proof}

From \eqref{eq:cf1}, one has that
\begin{align}
\label{eq:cfd1}
\hat{u}(\xi ,t)&=\left (\frac{2}{ct^{\alpha }\|\xi \|}\right )^{\frac{d}{2}-1}
\frac{\Gamma (d/2)\beta /c}{\mathrm{Beta}(\frac{d}{\beta },\gamma +1)}
\int _{0}^{c} (z/c)^{\frac{d}{2}}(1-(z/c)^{\beta })^{\gamma }J_{
\frac{d}{2}-1}\left (t^{\alpha }z\|\xi \|\right )\mathrm{d}z
\nonumber
\\
&=
\frac{2\beta /c}{\mathrm{Beta}(\frac{d}{\beta },\gamma +1)\mathrm{Beta}(\frac{1}{2},\frac{d-1}{2})}
\int _{0}^{c}(z/c)^{d-1}(1-(z/c)^{\beta })^{\gamma } \mathrm{d}z
\nonumber
\\
&\quad \times \int _{0}^{1} (1-w^{2})^{\frac{d-1}{2}-1}\cos (\|\xi \| zw
t^{\alpha })\mathrm{d}w
\nonumber
\\
&=\mathbf{E}_{\mathfrak{U}\,\mathfrak{W}}\left [\cos ( \|\xi \|
\mathfrak{U}\,\mathfrak{W}\,t^{\alpha })\right ]
\end{align}
where we have used
\begin{equation}
\label{eq:poisson}
J_{\mu }(z)=\frac{(z/2)^{\mu }}{\sqrt{\pi }\Gamma (\mu +\frac{1}{2})}\int _{-1}^{+1}(1-w^{2})^{
\mu -\frac{1}{2}}\cos (z w)\mathrm{d}w
\end{equation}
valid for $\mu >-\frac{1}{2},z\in \mathbb{R}$. Therefore, from
\eqref{eq:cfd1} we get \eqref{eq:dfd2}. Indeed,
\begin{align*}
u(x,t)&=\mathcal{F}^{-1}\hat{u}(\xi , t)
\\
&=\frac{1}{(2\pi )^{d}}
\frac{2\beta /c}{\mathrm{Beta}(\frac{d}{\beta },\gamma +1)\mathrm{Beta}(\frac{1}{2},\frac{d-1}{2})}
\int _{0}^{c}(z/c)^{d-1}(1-(z/c)^{\beta })^{\gamma } \mathrm{d}z
\nonumber
\\
&\quad \times \int _{0}^{1} (1-w^{2})^{\frac{d-1}{2}-1}\mathrm{d}w
\int _{\mathbb{R}^{d}} e^{-i x\cdot \xi }\left [
\frac{e^{i \|\xi \| zw t^{\alpha }}+e^{- i\|\xi \| z w t^{\alpha }}}{2}
\right ]\mathrm{d}\xi
\nonumber
\end{align*}
which concludes the proof.
\end{proof}

\begin{remark}

We observe that:
\begin{itemize}
\item For $d=2$, the density function\index{density function} $f_{\mathfrak{W}}$ becomes the probability
law\index{probability law} of the square root of $T$, where
\begin{equation*}
T:=\text{meas}\{t\in [0,1]: B(t)>0\},
\end{equation*}
and $(B(t))_{t\geq 0}$ represents the standard one-dimensional Brownian
motion. $T$ leads to the well-known arcsin law of the Wiener process which
is given by
\begin{equation*}
\frac{1}{\pi \sqrt{w(1-w)}}1_{0<w<1}.
\end{equation*}
It is easy to prove that
$\sqrt{T}\stackrel{\text{(law)}}{=} \mathfrak{W}$.
\item For $d=3$, the random variable $\mathfrak{W}, t>0$, is uniformly distributed
in $(0,1)$.
\item For $d\geq 4$, the density function\index{density function} $f_{\mathfrak{W}}$ represents a
Wigner $(d-2)$-sphere law.
\end{itemize}

\end{remark}

\begin{remark}
It is simple to prove by direct calculations that the mean squared displacement
goes like $t^{2\alpha }$, recovering the entire range of behaviours from
sub-diffusion ($\alpha <1/2$) to super-diffusion ($\alpha >1/2$); i.e.,
\begin{equation*}
\int _{\mathbb{R}^{d}} \|x\|^{2} u(x,t)\mathrm{d}x=
\frac{\Gamma (\frac{1}{\beta }(d+2))\Gamma (\frac{d}{\beta }+\gamma +1)}{\Gamma (\frac{d}{\beta })\Gamma (\frac{1}{\beta }(d+2)+\gamma +1)}c^{2}t^{2
\alpha }.
\end{equation*}
\end{remark}


\begin{acknowledgement}
The authors wish to thank the referees for their comments which allowed
the improvement of the previous version of the manuscript.
\end{acknowledgement}


\end{document}